\newtheorem{theorem}{Theorem}
\newtheorem{lemma}{Lemma}
\newtheorem{corollary}{Corollary}
\theoremstyle{definition}
\theoremstyle{remark}
\newtheorem{remark}{Remark}
\numberwithin{equation}{section}
\begin{document}

\title[Order continuous extensions of positive compact operators]
{Order continuous extensions of  positive compact operators on Banach lattices}

\author[J.X. Chen]
{Jin Xi Chen }

\address{College of Mathematics and Information Science, Shaanxi Normal
University, Xi'an 710062, P.R. China}
\curraddr{Department of Mathematics, Southwest Jiaotong
University, Chengdu 610031, P.R. China}
 \email{jinxichen@home.swjtu.edu.cn}

\author[Z.L. Chen]
{Zi Li Chen}
\address{Department of Mathematics, Southwest Jiaotong
University, Chengdu 610031, P.R. China}
\email{zlchen@home.swjtu.edu.cn}
\thanks{The first two authors were supported in part by the Fundamental Research Funds for the Central Universities (SWJTU09ZT36)}
\author[G.X. Ji]
{Guo Xing Ji}
\address{College of Mathematics and Information Science, Shaanxi Normal
University, Xi'an 710062, P.R. China}
\email{gxji@snnu.edu.cn}
\thanks{The third author was supported in part by the NNSF of China (No.10971123)
and the Specialized Research Fund for the Doctoral Program of Higher Education (No.  20090202110001)}
\subjclass[2000]{Primary 47B65; Secondary 46B42, 47B07}

\keywords{ order continuous operator, positive compact operator, order continuous extension, orthomorphism, Banach lattice}

\begin{abstract}
Let $E$ and $F$ be Banach lattices. Let $G$ be a vector sublattice of $E$ and $T: G\rightarrow F$ be an order continuous positive compact (resp. weakly compact) operators. We show that if $G$ is an ideal or an order dense sublattice of $E$, then $T$ has a norm preserving compact (resp. weakly compact) positive extension to $E$ which is likewise  order continuous on $E$. In particular, we prove that every  compact positive orthomorphism on an order dense sublattice of $E$ extends uniquely to a  compact positive orthomorphism on $E$.

\end{abstract}

\maketitle
\baselineskip 5mm

\section{Introduction}
In the operator version of Hahn-Banach-Kantorovich theorem, the range space
is assumed to be Dedekind complete, i.e., order complete (cf. \cite[Theorem 2.1]{2}). In the past two decades this assumption has been considerably weakened in a few papers, among which are those of Abramovich and Wickstead \cite{AW}, N. D\u{a}net \cite{D}, N. D\u{a}net and R. M. D\u{a}net \cite{DD}, R. M. D\u{a}net and Wong \cite{DW}, and Zhang \cite{Z}. Specially, we would like to mention the paper \cite{Z}. Let $E$ and $F$ be arbitrary Banach lattices and $G$ be an ideal or a majorizing sublattice of $E$. Zhang proved that every positive compact operator $T: G\rightarrow F$ has a positive compact extension to the whole  space $E$.
 \par Note that order continuous operators are a very important class of order bounded linear operators on Riesz spaces (i.e., vector lattices).
Recall that a positive operator $T: E\rightarrow F$ between two Riesz spaces is \textit{order continuous} if and only if $x_{\alpha}\downarrow 0$ in $E$ implies $Tx_{\alpha}\downarrow 0$ in $F$ (and also if and only if $0\leq x_{\alpha}\uparrow x$ in $E$ implies $Tx_{\alpha}\uparrow Tx$ in $F$). A well known result concerning the order continuous extension is that every positive order continuous operator from an order dense majorizing Riesz subspace of a Riesz space $E$ into a Dedekind complete Riesz space has a unique order continuous positive extension to all of $E$ (cf. \cite[Theorem 4.12]{2}). Other related extension results of this aspect were obtained with the range space Dedekind complete. See, e.g., \cite{A, HHS}, and \cite{SZ} specially for order continuous functionals. In \cite{SZ} it was shown that every norm bounded  order continuous positive linear functional defined on an ideal $I$ of a Banach lattice $E$ has a norm preserving order continuous positive extension to all of $E$. Clearly, every norm bounded linear functional is a compact operator. It should be noted that a compact (resp. weakly compact) operator between Banach lattices need not be an order continuous operator. Indeed, from Theorem 39.6 of \cite{Za} it is easy to see that whenever a  Banach lattice $E$ does not have order continuous norm, then there exists a positive (norm bounded) linear functional on $E$, which is not order continuous. On the other hand, an order continuous operator is not necessarily weakly compact. For instance, the identity operator $I:E\rightarrow E$ on a non-reflexive Banach lattice $E$ is a positive order continuous operator, which is not weakly compact.
\par The main purpose of this paper is to investigate order continuity of positive compact extension on Banach lattices. We prove that every order continuous positive compact (resp. weakly compact) operator from an ideal or an order dense sublattice $G$ of a Banach lattice $E$ into a Banach lattice $F$ has a norm preserving compact (resp. weakly compact) positive extension to $E$ which is likewise order continuous on $E$. In particular, we  prove that every  compact positive orthomorphism on an order dense sublattice $G$ of $E$ extends uniquely to a  compact positive orthomorphism on $E$. In our proofs we do not have to  assume that the range space is Dedekind complete.
\par Our notions are standard. For the theory of Banach lattices and positive operators, we refer the reader to the monographs \cite{2, M, HH, Za}.

\section{Order continuous extensions of positive compact operators}
Let $E$ be an Archimedean Riesz space and $G$ be a Riesz subspace (i.e., sublattice) of $E$. Recall that $G$ is said to be order dense in $E$ whenever for each $0<x\in E$, there exists some $y\in G$ with $0<y\leq x.$ It is useful to know that $G$ is order dense in $E$ if and only if $\{y\in G:0\leq y\leq x\}\uparrow x$ holds in $E$ for all $x\in E^{+}$ ( see \cite[Theorem 3.1]{2}). We say that $G$ is a majorizing subspace whenever for each $x\in E$ there exists some $y\in G$ with $x\leq y$. It should be noted that an order dense vector sublattice of $E$ need not be  a majorizing sublattice or an ideal of $E$. For instance, $c_{0}\oplus c$ is order dense in $\ell_{\infty}\oplus\ell_{\infty}$, but it is not a majorizing sublattice or an ideal of $\ell_{\infty}\oplus\ell_{\infty}$.
\par To prove our results we need the following useful lemma. See  \cite[Theorem 3.2]{2} or \cite[Theorem 23.2]{Za} for details.

\begin{lemma}\label{Lemma1}
Let $G$ be either an ideal or an order dense Riesz subspace of a Riesz space $E$,and let $D$ be an upwards directed subset of $G$. Let $x\in G$. Then $D\uparrow x$ in $G$ if and only if $D\uparrow x$ in $E$.
\end{lemma}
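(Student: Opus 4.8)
The plan is to prove the two implications separately; the ``if'' part is immediate, and the ``only if'' part carries all the content. For the ``if'' direction, suppose $D\uparrow x$ in $E$. Since $G$ is a sublattice, its order is the restriction of the order of $E$, so $x\in G$ is still an upper bound of $D$ in $G$, while any upper bound of $D$ in $G$ is also one in $E$ and therefore dominates $x=\sup_{E}D$. Hence $x=\sup_{G}D$, i.e.\ $D\uparrow x$ in $G$; note this half uses neither hypothesis on $G$.

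For the ``only if'' direction, assume $D\uparrow x$ in $G$. Here too $x$ is automatically an upper bound of $D$ in $E$, so it suffices to show that every $z\in E$ with $d\le z$ for all $d\in D$ satisfies $x\le z$. The key trick is to pass from $z$ to $x\wedge z$: from $d\le x$ and $d\le z$ for each $d\in D$ one gets that $x\wedge z$ is an upper bound of $D$ in $E$, and clearly $x\wedge z\le x$. The whole matter then reduces to showing $x\wedge z\in G$, for once that is known, $x\wedge z$ is an upper bound of $D$ in $G$, whence $x\le x\wedge z\le x$ and so $x=x\wedge z\le z$.

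It is exactly the membership $x\wedge z\in G$ that forces the dichotomy in the hypothesis. If $G$ is an ideal, I would fix some $d_{0}\in D\subseteq G$ and note that $d_{0}\le x\wedge z\le x$ with $d_{0},x\in G$, so $0\le (x\wedge z)-d_{0}\le x-d_{0}\in G$ and solidity of the ideal yields $(x\wedge z)-d_{0}\in G$, hence $x\wedge z\in G$. If instead $G$ is order dense, I would argue by contradiction: if $x\not\leq z$, then $w:=x-(x\wedge z)=(x-z)^{+}>0$, so order density supplies $g\in G$ with $0<g\le w$; but then $d\le x\wedge z=x-w\le x-g$ for every $d\in D$, so $x-g\in G$ is an upper bound of $D$ in $G$ with $x-g<x$, contradicting $x=\sup_{G}D$. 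The step I expect to be the only genuine obstacle is this order-dense case: the idea is to realise that one should work with $x\wedge z$ rather than with $z$ itself, and that order density is precisely the property that lets one slip an element of $G$ into the gap below $x$. Note that no Archimedean property and no Dedekind-completeness assumption is used anywhere.
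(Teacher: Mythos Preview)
Your proof is correct. The paper does not actually prove this lemma: it merely states it and refers the reader to \cite[Theorem~3.2]{2} or \cite[Theorem~23.2]{Za} for details, so there is no in-paper argument to compare against. Your self-contained treatment---reducing the nontrivial direction to showing that the upper bound $x\wedge z$ can be pulled back into $G$, via solidity in the ideal case and via an order-density ``gap'' argument in the other---is the standard route and matches the arguments in those references. One tacit assumption worth flagging is that you pick $d_{0}\in D$ in the ideal case, so you are using that an upwards directed set is nonempty; this is the usual convention and is harmless here.
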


Our first result deals with the order continuous extension of a positive compact (resp. weakly compact) operator defined on an order dense sublattice.

 \begin{theorem}\label{Theorem1}
Let $E$ and $F$ be Banach lattices, and let $G$ be an order dense sublattice of $E$. If $T: G\rightarrow F$ is  an order continuous positive compact (resp. weakly compact) operator, then $T$ extends uniquely to an order continuous positive compact (resp. weakly compact) operator $\widetilde{T}:E\rightarrow F$ with $\|\widetilde{T}\|=\|T\|$.

 Moreover, if, in addition, $T$ is a Riesz homomorphism, so is $\widetilde{T}$.
\end{theorem}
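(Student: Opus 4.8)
The plan is to build the extension from the formula
\[
\widetilde{T}x:=\sup\{\,Ty: y\in G,\ 0\le y\le x\,\},\qquad x\in E^{+},
\]
and the first issue is whether this supremum exists in $F$, since $F$ is not assumed Dedekind complete. Write $A_{x}:=\{y\in G:0\le y\le x\}$; this set is upwards directed and, by order density together with Lemma~\ref{Lemma1} (cf.\ also \cite[Theorem 3.1]{2}), $A_{x}\uparrow x$ in $E$. Every $y\in A_{x}$ has $\|y\|\le\|x\|$, so $T(A_{x})$ is an upwards directed subset of $\|x\|\,T(B_{G})$, where $B_{G}$ is the closed unit ball of $G$, and this set is relatively norm compact (resp.\ relatively weakly compact) because $T$ is compact (resp.\ weakly compact). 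Here I would prove the elementary fact that \emph{an upwards directed net lying in a relatively norm compact (resp.\ weakly compact) subset of a Banach lattice has a least upper bound, to which the whole net converges in norm (resp.\ weakly)}: some subnet converges to a point $z$; since the positive cone is norm closed and convex, hence weakly closed, and eventually $z_{\beta}\ge z_{\alpha}$ along that subnet, passing to the limit shows $z$ is an upper bound, while any other upper bound dominates the subnet and hence $z$, so $z=\sup z_{\alpha}$; convergence of the whole net then follows from monotonicity of the lattice norm in the compact case, and in the weakly compact case from the standard fact that a net converges weakly to $z$ as soon as every subnet has a further subnet doing so. Applying this to $T(A_{x})$ makes $\widetilde{T}x$ well defined, equal to a norm (resp.\ weak) limit of $(Ty)_{y\in A_{x}}$.

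The crux is additivity of $\widetilde{T}$ on $E^{+}$, and it is here that order density must substitute for $G$ being an ideal. The inequality $\widetilde{T}(x_{1}+x_{2})\ge\widetilde{T}x_{1}+\widetilde{T}x_{2}$ is clear, since $A_{x_{1}}+A_{x_{2}}\subseteq A_{x_{1}+x_{2}}$ and $\sup(S_{1}+S_{2})=\sup S_{1}+\sup S_{2}$ holds in any Riesz space whenever the right-hand sides exist. For the reverse inequality I fix $y\in A_{x_{1}+x_{2}}$ and set $u:=y\wedge x_{1}$ and $v:=(y-x_{1})^{+}$, computed in $E$, so that $y=u+v$ with $0\le u\le x_{1}$ and $0\le v\le x_{2}$. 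These elements need not belong to $G$, but $A_{u}\uparrow u$ and $A_{v}\uparrow v$ in $E$ by order density, with $A_{u}\subseteq A_{x_{1}}$ and $A_{v}\subseteq A_{x_{2}}$; hence $A_{u}+A_{v}$ is an upwards directed subset of $G$ with $A_{u}+A_{v}\uparrow u+v=y$ in $E$. Since $y\in G$, Lemma~\ref{Lemma1} upgrades this to $A_{u}+A_{v}\uparrow y$ in $G$, so order continuity of $T$ gives $T(A_{u}+A_{v})\uparrow Ty$ in $F$. As $T(A_{u}+A_{v})=\{Tw_{1}+Tw_{2}:w_{1}\in A_{u},\,w_{2}\in A_{v}\}$, it follows (the suprema $\sup T(A_{u})$ and $\sup T(A_{v})$ existing as before) that $Ty=\sup T(A_{u})+\sup T(A_{v})\le\widetilde{T}x_{1}+\widetilde{T}x_{2}$, and taking the supremum over $y\in A_{x_{1}+x_{2}}$ completes additivity. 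Positive homogeneity of $\widetilde{T}$ being immediate, $\widetilde{T}$ extends to a positive linear operator on $E$; it genuinely extends $T$ because $A_{y}$ has largest element $y$ when $y\in G^{+}$, and $\|\widetilde{T}\|=\|T\|$ follows from $|\widetilde{T}x|\le\widetilde{T}|x|$, lower semicontinuity of the norm under the relevant convergence, and $\|Ty\|\le\|T\|\,\|x\|$ for $y\in A_{|x|}$.

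It remains to verify order continuity, (weak) compactness, uniqueness, and the homomorphism statement. For order continuity, let $0\le x_{\alpha}\uparrow x$ in $E$; then $\widetilde{T}x$ is an upper bound of $\{\widetilde{T}x_{\alpha}\}$, and if $z$ is any other one I fix $y\in A_{x}$, note that $y\wedge x_{\alpha}\uparrow y$ in $E$, and form $D:=\bigcup_{\alpha}A_{y\wedge x_{\alpha}}\subseteq G$, which is upwards directed with $D\uparrow y$ in $E$, hence in $G$ by Lemma~\ref{Lemma1}; order continuity of $T$ gives $T(D)\uparrow Ty$, while every $w\in D$ lies in some $A_{x_{\alpha}}$, so $Tw\le\widetilde{T}x_{\alpha}\le z$, whence $Ty=\sup T(D)\le z$ and, taking the supremum over $y\in A_{x}$, $\widetilde{T}x\le z$. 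For (weak) compactness, for $x$ in the closed unit ball of $E$ one has $x^{+},x^{-}$ in the closed unit ball of $E^{+}$ and $\widetilde{T}x^{+},\widetilde{T}x^{-}\in\overline{T(B_{G})}$ (resp.\ its weak closure), being norm (resp.\ weak) limits of elements of $T(B_{G})$; hence $\widetilde{T}(B_{E})$ is contained in the norm compact set $\overline{T(B_{G})}-\overline{T(B_{G})}$ (resp.\ the corresponding weakly compact difference), so $\widetilde{T}$ is compact (resp.\ weakly compact). Uniqueness is immediate: any order continuous positive extension $S$ of $T$ satisfies $Sx=\sup S(A_{x})=\sup T(A_{x})=\widetilde{T}x$ for $x\in E^{+}$. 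Finally, if $T$ is a Riesz homomorphism and $u,v\in E^{+}$ with $u\wedge v=0$, then $w_{1}\wedge w_{2}=0$ in $G$ for all $w_{1}\in A_{u}$, $w_{2}\in A_{v}$, so $Tw_{1}\wedge Tw_{2}=0$; applying the infinite distributive law in $F$ to the existing suprema $\widetilde{T}u=\sup T(A_{u})$ and $\widetilde{T}v=\sup T(A_{v})$ yields $\widetilde{T}u\wedge\widetilde{T}v=0$, and since a positive operator preserving disjointness of positive elements is a Riesz homomorphism, $\widetilde{T}$ is one as well.

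I expect the additivity step to be the main obstacle: because $u=y\wedge x_{1}$ and $v=(y-x_{1})^{+}$ leave $G$, the decomposition $y=u+v$ cannot be carried out inside $G$, and it is only the combination of order-dense approximation, Lemma~\ref{Lemma1}, and order continuity of $T$ that lets one still control $Ty$ by $\widetilde{T}x_{1}+\widetilde{T}x_{2}$.
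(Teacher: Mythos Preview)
Your proof is correct and follows the same overall blueprint as the paper: the same extension formula $\widetilde{T}x=\sup T(A_x)$, the same existence argument via (weakly) compactness of an increasing net, the same use of Lemma~\ref{Lemma1} together with order continuity of $T$, and the same disjointness argument for the Riesz homomorphism part.

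The one place where you diverge is additivity, and it is worth seeing how the paper does it. Instead of splitting $y\in A_{x_1+x_2}$ as $y=(y\wedge x_1)+(y-x_1)^{+}$ inside $E$ and then approximating each piece from below in $G$, the paper first proves the intermediate claim that \emph{for every net $\{x_\alpha\}\subseteq G$ with $0\le x_\alpha\uparrow x$ in $E$ one has $T(x_\alpha)\uparrow\widetilde{T}(x)$ in $F$}: given another upper bound $v$ and any $y\in A_x$, one notes $x_\alpha\wedge y\in G$ with $x_\alpha\wedge y\uparrow y$ in $G$ by Lemma~\ref{Lemma1}, so $T(x_\alpha\wedge y)\uparrow T(y)\le v$. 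Once this is in hand, additivity is a one-liner: pick $G$-nets $x_\alpha\uparrow x_1$, $y_\beta\uparrow x_2$; then $x_\alpha+y_\beta\uparrow x_1+x_2$, so $T(x_\alpha)+T(y_\beta)\uparrow\widetilde{T}(x_1+x_2)$ while simultaneously $T(x_\alpha)+T(y_\beta)\uparrow\widetilde{T}x_1+\widetilde{T}x_2$. This intermediate claim is exactly what you reprove, in a localized form, in your order continuity paragraph; isolating it first lets the paper avoid the Riesz decomposition $y=u+v$ in $E$ and the nested approximation $A_u+A_v\uparrow y$, which is where you rightly flagged the main effort in your approach.
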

\begin{proof}
 We first assume that $T$ is an order continuous positive compact operator. For each $x\in E^{+}$, we claim that $\sup\{T(y):0\leq y\leq x, \,y\in G\}$ exists in $F$. Indeed, since $G$ is a sublattice and $T$ is  positive, $\{T(y):0\leq y\leq x, \,\,y\in G\}$ is an upwards directed  set, in other words, an increasing net, which is contained in a relatively compact subset of $F$. Therefore, $\{T(y):0\leq y\leq x, \,\,y\in G\}$ converges in norm to some element $u\in F^{+}$. It follows from \cite[Theorem 15.7]{Za} that $\{T(y):0\leq y\leq x, \,y\in G\}\uparrow u$ in $F$. For each $x\in E^{+}$, we define $\widetilde{T}(x)$ by $$\widetilde{T}(x)=\sup\{T(y):0\leq y\leq x, \,y\in G\}.$$It is easy to see that $\widetilde{T}(x)=T(x)$ for each $x\in G^{+}$.

 \par Now we claim that $T(x_{\alpha})\uparrow \widetilde{T}(x)$ holds in $F$ for every net $\{x_{\alpha}\}\subseteq G$ satisfying $0\leq x_{\alpha}\uparrow x$ in $E$. Evidently, $\widetilde{T}(x)$ is an upper bound of the net $\{T(x_{\alpha})\}$. If $v\in F$ is another upper bound of the net, we have to verify that $\widetilde{T}(x)\leq v$. To this end, let $y\in G^{+}$ be an arbitrary element such that $0\leq y\leq x$. Then we have $0\leq x_{\alpha}\wedge y\uparrow y$ in $E$. Since $G$ is order dense in $E$, it follows from Lemma \ref{Lemma1} that $0\leq x_{\alpha}\wedge y\uparrow y$ in $G$. Therefore, $T(x_{\alpha}\wedge y)\uparrow T(y)$ in $F$ since $T: G\rightarrow F$ is order continuous. Then, from $T(x_{\alpha}\wedge y)\leq T(x_{\alpha})\leq v$ we can easily see that $T(y)=\sup T(x_{\alpha}\wedge y)\leq v$, and hence $\widetilde{T}(x)=\sup\{T(y):0\leq y\leq x, \,y\in G\} \leq v$. Therefore, $T(x_{\alpha})\uparrow \widetilde{T}(x)$, as desired.
 \par Next, let $x,\,y\in E^{+}$. By the  order denseness of $G$ in $E$ there exist two nets $\{x_{\alpha}\}$ and $\{y_{\beta}\}$ in $G^{+}$ such that $0\leq x_{\alpha}\uparrow x$ and $0\leq y_{\beta}\uparrow y$ \,in $E$. Clearly,  $0\leq x_{\alpha}+y_{\beta}\uparrow x+y$ in $E$. Therefore, we have $ T(x_{\alpha})+T(y_{\beta})=T( x_{\alpha}+y_{\beta})\uparrow \widetilde{T}(x+y).$  From $T( x_{\alpha})\uparrow \widetilde{T}x$ and $T y_{\beta}\uparrow \widetilde{T}y$ it follows easily that $\widetilde{T}(x+y)=\widetilde{T}(x)+\widetilde{T}(y)$ for all $x,\,y\in E^{+}$. This implies that $\widetilde{T}$ is additive on $E^{+}$. Thus, $\widetilde{T}$ extends to a positive operator on the entire space $E$, denoted again by $\widetilde{T}$ (cf. \cite[Lemma 20.1]{Za}). To show that the  operator $\widetilde{T}:E\rightarrow F$ defined above is a compact operator, let $B_{+}=\{x\in E^{+}:\,\|x\|\leq 1 \}$ and $U_{+}=\{x\in G^{+}:\,\|x\|\leq 1 \}$. It should be noted  that for every $x\in B_{+}$ we have $$\widetilde{T}(x)\in \overline{\{T(y):0\leq y\leq x,\, y\in G\}}\subseteq \overline{T(U_{+})},$$ that is, $\widetilde{T}(B_{+})$ is contained in a compact subset $\overline{T(U_{+})}$ since $T$ is a positive compact operator. This implies that $\widetilde{T}$ is a compact operator with $\|\widetilde{T}\|=\|T\|$.
\par Finally, we have to show that the thus defined $\widetilde{T}:E\rightarrow F$ is order continuous on $E$. Let $0\leq x_{\alpha}\uparrow x$ in $E$ and let $D=\{y\in G^{+}: 0\leq y\leq x_{\alpha}\, \textmd{for some}\,\, \alpha\}$. Clearly, $D$ is upwards directed. In view of the denseness of $G$ in $E$, it follows that $D\uparrow x$ in $E$. Thus, $T(D)\uparrow \widetilde{T}(x)$ in $F$. On the other hand, since $\widetilde{T}$ is positive, we see that for any $y\in D$ there exists some index $\alpha$ with $y\leq x_{\alpha}$ such that
$$T(y)=\widetilde{T}(y)\leq \widetilde{T}(x_{\alpha})\leq \widetilde{T}(x).$$It follows  that $\widetilde{T}(x_{\alpha})\uparrow\widetilde{T}(x)$, that is, $\widetilde{T}$
is an order continuous positive compact extension of $T$ to $E$. It is easy to see that $T$ has a uniqie order continuous extension to $E$ since $G$ is order dense in $E$.
\par Now we assume that $T: G\rightarrow F$ is  an order continuous compact Riesz homomorphism. Then, by the above discussion, $T$ has a uniqie order continuous positive compact extension, $\widetilde{T}$, defined via the formula   $$\widetilde{T}(x)=\sup\{T(y):0\leq y\leq x, \,y\in G\}$$for each $x\in E^{+}$. To see that $\widetilde{T}$ is a Riesz homomorphism, let $x,\,y\in E$ with $x\wedge y=0$. By the order denseness of $G$ in $E$, pick two nets $\{x_{\alpha}\}$ and $\{y_{\beta}\}$ in $G^{+}$ such that $0\leq x_{\alpha}\uparrow x$ and $0\leq y_{\beta}\uparrow y$ in $E$. Since $T(x_{\alpha})\wedge T(y_{\beta})= T(x_{\alpha}\wedge y_{\beta})=0$, from
 $$T(x_{\alpha})\wedge T(y_{\beta})\uparrow\widetilde{T}(x)\wedge\widetilde{T}(y)$$we can easily see that $\widetilde{T}(x)\wedge\widetilde{T}(y)=0$, and hence $\widetilde{T}$ is also a Riesz homomorphism.

\par We note that the above proof is also valid for order continuous  weakly compact operators. In this case $\overline{T(U_{+})}$ is weakly compact and the upwards directed set $\{T(y):0\leq y\leq x, \,\,y\in G\}$ is contained in a weakly compact subset. It followed that  $\{T(y):0\leq y\leq x, \,\,y\in G\}$ weakly conveges, and hence it converges in norm (see, e.g.,  \cite[p. 89]{HH}).
\end{proof}
It is well known that every norm bounded positive linear functional on a sublattice of a Banach lattice has a norm preserving positive extension (cf. \cite[Theorem 39.2]{Za}). As an immediate consequence of the preceding theorem we have the following extension result for order continuous functionals.

\begin{corollary}\label{corollary1}
Suppose that $G$ is an order dense sublattice of a Banach lattice $E$. Then every norm bounded positive order continuous linear functional $\psi$ on $G$ can be uniquely extended to an order continuous positive linear functional $\varphi$ on $E$ such that $\|\varphi\|=\|\psi\|$.

\end{corollary}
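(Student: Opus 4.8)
The plan is to deduce Corollary~\ref{corollary1} directly from Theorem~\ref{Theorem1} by viewing the scalar field $\mathbb{R}$ as a one-dimensional Banach lattice and recognizing that any norm bounded linear functional is automatically a compact operator (its image of the unit ball is a bounded, hence relatively compact, subset of $\mathbb{R}$). Thus I first observe that a norm bounded positive order continuous linear functional $\psi : G \to \mathbb{R}$ is precisely an order continuous positive compact operator from $G$ into the Banach lattice $F = \mathbb{R}$.

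With this identification in place, I would invoke Theorem~\ref{Theorem1} with $F = \mathbb{R}$: since $G$ is an order dense sublattice of $E$, the functional $\psi$ extends uniquely to an order continuous positive compact operator $\varphi : E \to \mathbb{R}$ with $\|\varphi\| = \|\psi\|$. Being a linear map into $\mathbb{R}$, $\varphi$ is exactly an order continuous positive linear functional on $E$, and the norm equality together with uniqueness of the order continuous extension is inherited verbatim from the theorem. This already gives the full statement; the moreover-clause about Riesz homomorphisms is not needed here (though one could note that a nonzero positive functional into $\mathbb{R}$ is trivially a lattice homomorphism on the positive cone only in special cases, so it is simply omitted).

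There is essentially no obstacle: the only point requiring a word of justification is the elementary remark, already made in the introduction, that every norm bounded linear functional is a compact operator, so that the hypotheses of Theorem~\ref{Theorem1} are met. I would also recall the classical fact (\cite[Theorem 39.2]{Za}) that norm preserving positive extension of a functional exists in general, to emphasize that the new content here is the \emph{order continuity} of the extension, which is precisely what Theorem~\ref{Theorem1} supplies. The proof is therefore a one-line reduction, and I would present it as such.
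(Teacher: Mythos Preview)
Your proposal is correct and matches the paper's intended argument: the corollary is stated as an immediate consequence of Theorem~\ref{Theorem1}, obtained exactly by taking $F=\mathbb{R}$ and noting (as the introduction already observes) that every norm bounded linear functional is a compact operator. No further work is needed.
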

\begin{remark}\label{Remark1}
 In \cite{SZ} it was shown that every norm bounded  order continuous positive linear functional defined on an ideal $I$ of a Banach lattice $E$ has a norm preserving order continuous positive extension to all of $E$. Note that an order dense sublattice of a Banach lattice need not be an ideal, and vice versa.
\end{remark}
Let $E_{n}^{\ast}$ be the Banach lattice of all norm bounded order continuous linear functionals on a normed Riesz space $E$. Recall that if $E$ has order continuous norm, then every norm bound linear functional on $E$ is order continuous. The following result, parallel to Corollary 1.3 of \cite{SZ}, should be immediate from Corollary \ref{corollary1}.
\begin{corollary}\label{Corollary2}
Let $E$ be a Banach lattice. if $E$ has a nonzero order dense sublattice with order continuous norm, then $E_{n}^{\ast}$ is nontrivial.
\end{corollary}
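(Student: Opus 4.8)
The plan is to manufacture a nonzero positive order continuous norm bounded functional on $G$ and then invoke Corollary \ref{corollary1}. First I would pick any $0<x_{0}\in G$, which exists since $G\neq\{0\}$ is a nonzero Riesz subspace. By the Hahn--Banach theorem there is some $f\in G^{\ast}$ with $f(x_{0})=\|x_{0}\|>0$. Since $G$ is a normed Riesz space, $G^{\ast}$ is an ideal of the order dual $G^{\sim}$, so the modulus $|f|$ exists and belongs to $G^{\ast}$; moreover, because $x_{0}\geq 0$, one has $|f|(x_{0})\geq |f(x_{0})|=\|x_{0}\|>0$. Hence $\psi:=|f|$ is a nonzero norm bounded positive linear functional on $G$.

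Next I would check that $\psi$ is order continuous. Since $G$ has order continuous norm, every norm bounded linear functional on $G$ is order continuous (as recalled above): if $y_{\alpha}\downarrow 0$ in $G$ then $\|y_{\alpha}\|\to 0$, whence $0\leq\psi(y_{\alpha})\leq\|\psi\|\,\|y_{\alpha}\|\to 0$ and $\psi(y_{\alpha})\downarrow 0$. Thus $\psi$ is a norm bounded positive order continuous linear functional on the order dense sublattice $G$ of $E$.

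Finally, I would apply Corollary \ref{corollary1}: $\psi$ extends to an order continuous positive linear functional $\varphi$ on $E$ with $\|\varphi\|=\|\psi\|>0$. In particular $\varphi\neq 0$, so $\varphi\in E_{n}^{\ast}\setminus\{0\}$ and therefore $E_{n}^{\ast}$ is nontrivial. There is essentially no serious obstacle here; the only step requiring a little care is the first one --- producing a nonzero \emph{positive} (not merely bounded) functional on $G$ --- which is handled by passing to the modulus and using that $G^{\ast}$ is an ideal in $G^{\sim}$.
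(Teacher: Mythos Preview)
Your proof is correct and follows exactly the approach the paper indicates: the paper states that the result ``should be immediate from Corollary~\ref{corollary1}'' after recalling that every norm bounded linear functional on a space with order continuous norm is order continuous, and you have correctly supplied the missing details (producing a nonzero \emph{positive} bounded functional on $G$ via the modulus in $G^{\ast}$, then extending by Corollary~\ref{corollary1}).
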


\par Recall that an order bounded operator $T:E\rightarrow E$ on an Archimedean Riesz space $E$ is said to be an orthomorphism whenever $x\bot y$ implies $Tx\bot y$ in $E$. It should be noted that every orthomorphism on an Archimedean Riesz space is order continuous (cf. \cite[Theorem 8.10]{2}). A well known result on the extension of orthomorphisms is that every orthomorphism on an Archimedean Riesz space $E$ always extends to an orthomorphism on $E^{\delta}$, the Dedekind completion of $E$ . By  Theorem \ref{Theorem1} we have the following  extension property of orthomorphisms.
\begin{corollary}\label{Corollary3}
Let $E$ be a Banach lattice and let $G$ be an order dense sublattice of $E$. Let $T:G\rightarrow G$ be a positive orthomorphism such that $T(U_{+})$ is norm totally bounded (resp. relatively weakly compact) subset of $E$, where $U_{+}=\{x\in G^{+}:\,\|x\|\leq 1 \}$. Then $T$ extends uniquely to a compact (resp. weakly compact) positive orthomorphism on $E$.
\end{corollary}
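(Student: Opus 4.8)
The plan is to reduce Corollary \ref{Corollary3} to Theorem \ref{Theorem1} by viewing $T$ as an operator with values in $E$, and then to check that the orthomorphism property is inherited by the extension. First I would observe that, since every orthomorphism on an Archimedean Riesz space is order continuous, $T:G\to G$ is an order continuous positive operator; composing with the inclusion $G\hookrightarrow E$ and invoking Lemma \ref{Lemma1} (if $0\le x_{\alpha}\uparrow x$ in $G$, then $Tx_{\alpha}\uparrow Tx$ in $G$, and since $\{Tx_{\alpha}\}$ is upwards directed in $G$ and $G$ is order dense, also $Tx_{\alpha}\uparrow Tx$ in $E$), the operator $T$ remains an order continuous positive operator from $G$ into $E$. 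Next, because $T$ is positive and $G$ is a sublattice, every $x$ in the unit ball of $G$ decomposes as $x=x^{+}-x^{-}$ with $x^{+},x^{-}\in U_{+}$, so $T$ maps the unit ball of $G$ into $\overline{T(U_{+})}-\overline{T(U_{+})}$; as this set is norm compact (resp. weakly compact) whenever $T(U_{+})$ is norm totally bounded (resp. relatively weakly compact, using that $U_{+}$ is convex and Mazur's theorem), the operator $T:G\to E$ is compact (resp. weakly compact).

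With this in hand I would apply Theorem \ref{Theorem1} with $F=E$: $T$ extends uniquely to an order continuous positive compact (resp. weakly compact) operator $\widetilde{T}:E\to E$ with $\|\widetilde{T}\|=\|T\|$, given on $E^{+}$ by $\widetilde{T}(x)=\sup\{T(y):0\le y\le x,\ y\in G\}$. In particular $\widetilde{T}$ is order bounded, so it remains only to verify that $\widetilde{T}$ is an orthomorphism, i.e., that $x\bot y$ in $E$ implies $\widetilde{T}x\bot y$. Since $\widetilde{T}$ is positive, $|\widetilde{T}x|\le\widetilde{T}|x|$, so it suffices to treat $x,y\in E^{+}$ with $x\wedge y=0$ and to show $\widetilde{T}x\wedge y=0$.

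For this, using the order denseness of $G$ I would pick nets $0\le x_{\alpha}\uparrow x$ and $0\le y_{\beta}\uparrow y$ in $G^{+}$. Then $0\le x_{\alpha}\wedge y_{\beta}\le x\wedge y=0$, so $x_{\alpha}\bot y_{\beta}$ in $G$, and since $T$ is an orthomorphism on $G$ we get $Tx_{\alpha}\wedge y_{\beta}=0$ in $G$, hence in $E$. Now $Tx_{\alpha}=\widetilde{T}x_{\alpha}\uparrow\widetilde{T}x$ in $E$ by order continuity of $\widetilde{T}$, and the lattice operations of $E$ preserve suprema, so fixing $\beta$ first gives $0=Tx_{\alpha}\wedge y_{\beta}\uparrow\widetilde{T}x\wedge y_{\beta}$, whence $\widetilde{T}x\wedge y_{\beta}=0$; letting $\beta$ vary, $\widetilde{T}x\wedge y_{\beta}\uparrow\widetilde{T}x\wedge y$, so $\widetilde{T}x\wedge y=0$, as required. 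Thus $\widetilde{T}$ is a compact (resp. weakly compact) positive orthomorphism extending $T$, and uniqueness follows from Theorem \ref{Theorem1} together with the fact that any orthomorphism extension of $T$ is automatically order continuous.

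I expect the main obstacle to be the orthomorphism verification in the last paragraph: one must pass from the two approximating nets $\{x_{\alpha}\}$ and $\{y_{\beta}\}$ to the limit one variable at a time, and be sure that the order continuity of $\widetilde{T}$ supplied by Theorem \ref{Theorem1} — which a priori concerns increasing nets with supremum in $E$ — is precisely what is needed here. The reduction steps (transferring order continuity from $G$-valued to $E$-valued via Lemma \ref{Lemma1}, and bootstrapping compactness from $U_{+}$ to the full unit ball via positivity) are routine but should be spelled out.
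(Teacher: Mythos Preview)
Your proposal is correct and follows essentially the same route as the paper: view $T$ as a map $G\to E$, use Lemma \ref{Lemma1} to transfer order continuity, apply Theorem \ref{Theorem1} to obtain $\widetilde{T}$, and then verify the orthomorphism property by approximating $x$ and $y$ with nets from $G^{+}$ and passing to the limit in $T(x_{\alpha})\wedge y_{\beta}=0$. Your write-up is in fact a bit more explicit than the paper's (you spell out the $U_{+}\to$ full unit ball reduction, the two-step limit in the orthomorphism check, and the uniqueness argument via automatic order continuity of orthomorphisms), but the argument is the same; the appeal to Mazur's theorem is unnecessary, since the difference of two weakly compact sets is already weakly compact.
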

\begin{proof}
Clearly, if $T$ is regarded as an operator from $G$ into the Banach lattice $E$,  then $T:G\rightarrow E$ is a compact (resp. weakly compact) positive operator. We claim that $T:G\rightarrow E$ is oder continuous. Indeed, if $0\leq x_{\alpha}\uparrow x$ in $G$, then $0\leq Tx_{\alpha}\uparrow Tx$ in $G$ since $T$ is a positive orthomorphism on $G$. Thus, it follows from Lemma \ref{Lemma1} that $0\leq Tx_{\alpha}\uparrow Tx$ in $E$. This implies that $T:G\rightarrow E$ is an order continuous compact (resp. weakly compact) positive operator. Then, by Theorem \ref{Theorem1}, we know that the formula $$\widetilde{T}(x)=\sup\{T(y):0\leq y\leq x, \,y\in G\},\,\,\, x\in E^{+}$$defines a unique norm preserving order continuous positive extension of $T$ to $E$, which is also compact (resp. weakly compact). To complete the proof we have to show that $\widetilde{T}:E\rightarrow E$ is an orthomorphism. To this end, let $x,\,y\in E$ with $x\wedge y=0$. Since $G$ is order dense in $E$, we can pick two nets $\{x_{\alpha}\}$ and $\{y_{\beta}\}$ in $G^{+}$ such that $0\leq x_{\alpha}\uparrow x$ and $0\leq y_{\beta}\uparrow y$ in $E$. Since $T:G\rightarrow G$ is a positive orthomorphism we have that $T(x_{\alpha})\wedge y_{\beta}=0$ in $G$. From
 $$T(x_{\alpha})\wedge y_{\beta}\uparrow\widetilde{T}(x)\wedge y$$we can easily see that $\widetilde{T}(x)\wedge y=0$. This implies that $\widetilde{T}$ is likewise a positive orthomorphism on $E$.
\end{proof}
An ideal of a Banach lattice need not be order dense. The following result deals with  the extension of order continuous positive compact (resp. weakly compact) operators defined on an ideal of a Banach lattice. It generalizes the result of  \cite{SZ}, mentioned above in Remark \ref{Remark1}.

\begin{theorem}\label{Theorem2}
Suppose that $E,\,\,F$  are Banach lattices and $I$ is an ideal of $E$. Then any order continuous positive compact (\,resp. weakly compact\,) operator $T:I\rightarrow F$ has a norm preserving positive compact  (\,resp. weakly compact\,) extension to $E$, which is likewise order continuous on $E$.
\end{theorem}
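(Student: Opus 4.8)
The plan is to reuse the construction from the proof of Theorem~\ref{Theorem1}, with the order denseness of $G$ replaced throughout by the fact that $I$ is an \emph{ideal}. For each $x\in E^{+}$ I would set $\widetilde{T}(x)=\sup\{T(y):0\leq y\leq x,\ y\in I\}$ and then verify, in order, that (i) this supremum exists in $F$, (ii) $\widetilde{T}$ is additive on $E^{+}$ and hence extends to a positive operator on $E$, (iii) this extension is norm preserving and compact (resp.\ weakly compact), and (iv) it is order continuous on $E$. One cannot instead pass first to the band $I^{dd}$, in which $I$ \emph{is} order dense, and simply quote Theorem~\ref{Theorem1}, since a band in an arbitrary Banach lattice need not be a projection band and the extension from $I^{dd}$ to $E$ would still have to be produced by hand; so the direct argument from $I$ is the natural route.

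For (i): since $I$ is a sublattice and $T\geq 0$, the set $\{T(y):0\leq y\leq x,\ y\in I\}$ is an increasing net, and because $0\leq y\leq x$ forces $\|y\|\leq\|x\|$, this net lies in the relatively compact (resp.\ weakly compact) set $\|x\|\,\overline{T(U_{+})}$, where $U_{+}=\{y\in I^{+}:\|y\|\leq 1\}$; exactly as in Theorem~\ref{Theorem1} it then converges in norm, and by \cite[Theorem 15.7]{Za} its limit is its supremum. Thus $\widetilde{T}(x)$ is well defined, agrees with $T(x)$ on $I^{+}$, and is monotone on $E^{+}$. For (ii), the inequality $\widetilde{T}(x)+\widetilde{T}(y)\leq\widetilde{T}(x+y)$ is immediate; conversely, given $w\in I$ with $0\leq w\leq x+y$, the Riesz decomposition property yields $w=w_{1}+w_{2}$ with $0\leq w_{1}\leq x$ and $0\leq w_{2}\leq y$, and since $I$ is an ideal we have $w_{1},w_{2}\in I$, so $T(w)=T(w_{1})+T(w_{2})\leq\widetilde{T}(x)+\widetilde{T}(y)$; taking the supremum over $w$ finishes the additivity. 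Hence $\widetilde{T}$ extends, by \cite[Lemma 20.1]{Za}, to a positive operator on $E$, still denoted $\widetilde{T}$.

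For (iii): for $x$ in the positive part of the unit ball of $E$ we have $\widetilde{T}(x)\in\overline{\{T(y):0\leq y\leq x,\ y\in I\}}\subseteq\overline{T(U_{+})}$, so $\widetilde{T}$ maps the positive part of the unit ball into the compact (resp.\ weakly compact) set $\overline{T(U_{+})}$; splitting an arbitrary $x$ as $x^{+}-x^{-}$ shows $\widetilde{T}$ is compact (resp.\ weakly compact) on $E$, and the same containment together with $|\widetilde{T}x|\leq\widetilde{T}|x|$ gives $\|\widetilde{T}\|\leq\|T\|$, hence $\|\widetilde{T}\|=\|T\|$. For (iv), let $0\leq x_{\alpha}\uparrow x$ in $E$ and let $v$ be an upper bound of $\{\widetilde{T}x_{\alpha}\}$; for each fixed $y\in I$ with $0\leq y\leq x$ one has $0\leq x_{\alpha}\wedge y\uparrow y$ in $E$ with $x_{\alpha}\wedge y\in I$, so by Lemma~\ref{Lemma1} the same holds in $I$, whence $T(x_{\alpha}\wedge y)\uparrow T(y)$ by order continuity of $T:I\rightarrow F$; since $T(x_{\alpha}\wedge y)=\widetilde{T}(x_{\alpha}\wedge y)\leq\widetilde{T}(x_{\alpha})\leq v$, we get $T(y)\leq v$, and taking the supremum over such $y$ yields $\widetilde{T}(x)\leq v$, i.e.\ $\widetilde{T}x_{\alpha}\uparrow\widetilde{T}x$. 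The weakly compact case is handled exactly as in the last paragraph of the proof of Theorem~\ref{Theorem1}. The step requiring the most care is the additivity in (ii): the net-approximation argument used for that purpose in Theorem~\ref{Theorem1} is simply unavailable once $I$ is not order dense (an element of $E^{+}$ disjoint from $I$ cannot be approached from below within $I$), and the replacement argument via Riesz decomposition works precisely because $I$ is an ideal, so that the decomposition summands of an element of $I$ again belong to $I$.
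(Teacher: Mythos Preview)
Your argument is correct, and your step (iv) is essentially identical to what the paper does for order continuity. The only substantive difference is in how the extension itself is obtained: the paper does not redo steps (i)--(iii) but simply invokes \cite[Theorem~1.2]{Z} (Zhang's extension theorem for positive compact operators from an ideal), which already supplies a norm preserving positive compact (resp.\ weakly compact) extension $\widetilde{T}$ given by the same formula $\widetilde{T}(x)=\sup\{T(y):0\leq y\leq x,\ y\in I\}$; the paper then only has to verify order continuity, which it does by the very argument you give in (iv).

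Your version is thus a self-contained reconstruction of Zhang's step, and your observation about additivity is exactly the point: the net-approximation used in Theorem~\ref{Theorem1} fails when $I$ is not order dense, and the Riesz decomposition argument (with the ideal property guaranteeing that the summands stay in $I$) is the correct replacement. This makes your proof longer but independent of \cite{Z}; the paper's proof is shorter but relies on that external reference.
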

\begin{proof}
From \cite[Theorem 1.2]{Z} we know that $T:I\rightarrow F$ has a norm preserving positive compact (resp. weakly compact) extension to all of $E$ satisfying $$\widetilde{T}(x)=\sup\{T(y):0\leq y\leq x, \,y\in I\},\,\,\, x\in E^{+}.$$To complete the proof  we have to establish the order continuity of  $\widetilde{T}$. Let $0\leq x_{\alpha}\uparrow x$ in $E$. Since $\widetilde{T}$ is positive, we have  $\widetilde{T}(x_{\alpha})\leq \widetilde{T}(x)$ for each $\alpha$. Suppose that $v\in F$ is an arbitrary upper bound of $\{\widetilde{T}(x_{\alpha})\}$. We only have to show that $\widetilde{T}(x)\leq v$. To this end, let $y\in I$ satisfy $0\leq y\leq x$. Clearly, $0\leq x_{\alpha}\wedge y\uparrow y$ in $E$. Since $I$ is an ideal of $E$, we have $x_{\alpha}\wedge y\in I$. From Lemma \ref{Lemma1} it easily follows that $0\leq x_{\alpha}\wedge y\uparrow y$ in $I$. By the order continuity of $T$ on $I$ we can see that $0\leq T(x_{\alpha}\wedge y)\uparrow T(y)$ holds in $F$. From $$T(x_{\alpha}\wedge y)=\widetilde{T}(x_{\alpha}\wedge y)  \leq \widetilde{T}(x_{\alpha})\leq v$$it follows that $T(y)\leq v$. This implies that $v$ is an upper bound of the set $\{T(y):0\leq y\leq x, \,y\in I\}$, hence, $\widetilde{T}(x)\leq v$, as desired. Now we can easily conclude that $\widetilde{T}(x)$ is the supremum  of $\{\widetilde{T}(x_{\alpha})\}$, that is , $\widetilde{T}(x_{\alpha})\uparrow\widetilde{T}(x)$ .
\end{proof}
For a Banach lattice $E$, we let $$E^{\,a}:=\{x\in E: \textmd{every monotone sequence in}\,\, [\,0,\,\, |\,x\,|\,]\,\, \textmd{is convergent}\}.$$Recall that $E^{\,a}$ is the maximal ideal in $E$ on which the induced norm is order continuous (cf. \cite[Proposition 2.4.10\,]{M}). As a consequence of the preceding theorem we have the following result.
\begin{corollary}\label{Corollary4}
 Let $T:E\rightarrow F$ be a positive compact (resp. weakly compact) operator between two Banach lattices. Then there exists an order continuous positive compact (resp. weakly compact) operator $\widetilde{T}:E\rightarrow F$ such that $0\leq\widetilde{T}\leq T$ and $\widetilde{T}=T$ on $E^{\,a}$.
\end{corollary}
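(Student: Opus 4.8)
The plan is to restrict $T$ to the ideal $E^{\,a}$, verify that this restriction is order continuous, and then apply Theorem~\ref{Theorem2}.

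First I would recall from \cite[Proposition 2.4.10]{M} that $E^{\,a}$ is an ideal of $E$ and that the norm induced on $E^{\,a}$ is order continuous. Put $S:=T|_{E^{\,a}}:E^{\,a}\rightarrow F$. Then $S$ is positive, and since the closed unit ball of $E^{\,a}$ is contained in that of $E$, $S$ is compact (resp. weakly compact) whenever $T$ is. The key point is that $S$ is order continuous: if $x_{\alpha}\downarrow 0$ in $E^{\,a}$, then $\|x_{\alpha}\|\downarrow 0$ by order continuity of the norm on $E^{\,a}$; the net $\{Sx_{\alpha}\}$ is then decreasing in $F^{+}$, and for any lower bound $w$ of it we have $w^{+}\le Sx_{\alpha}$ for all $\alpha$, hence $\|w^{+}\|\le\|S\|\,\|x_{\alpha}\|\to 0$, so $w^{+}=0$, i.e. $w\le 0$; thus $Sx_{\alpha}\downarrow 0$ in $F$.

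Now Theorem~\ref{Theorem2}, applied to the order continuous positive compact (resp. weakly compact) operator $S$ on the ideal $I=E^{\,a}$, yields a norm preserving positive compact (resp. weakly compact) extension $\widetilde{T}:E\rightarrow F$ of $S$ which is order continuous on $E$ and satisfies $\widetilde{T}(x)=\sup\{T(y):0\le y\le x,\ y\in E^{\,a}\}$ for $x\in E^{+}$. It then remains only to check the two assertions in the statement. For $x\in E^{+}$ and $y\in E^{\,a}$ with $0\le y\le x$ we have $T(y)\le T(x)$ by positivity of $T$, whence $0\le\widetilde{T}(x)\le T(x)$; this gives $0\le\widetilde{T}\le T$. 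And if $x\in (E^{\,a})^{+}$, then $x$ itself occurs in the supremum defining $\widetilde{T}(x)$, so $\widetilde{T}(x)=T(x)$, and by linearity $\widetilde{T}=T$ on all of $E^{\,a}$. The only step that requires any thought is the order continuity of $S$, and that reduces, as above, to the defining property of $E^{\,a}$ together with positivity and norm-boundedness of $T$; everything else is a direct application of Theorem~\ref{Theorem2}.
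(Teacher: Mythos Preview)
Your proposal is correct and follows essentially the same route as the paper: restrict $T$ to the ideal $E^{\,a}$, use the order continuity of the induced norm on $E^{\,a}$ to conclude that the restriction is order continuous, apply Theorem~\ref{Theorem2}, and then verify $0\le\widetilde{T}\le T$ via the explicit supremum formula. You supply slightly more detail than the paper does---in particular, your argument that $Sx_{\alpha}\downarrow 0$ via the bound $\|w^{+}\|\le\|S\|\,\|x_{\alpha}\|$ spells out what the paper leaves implicit---but the structure and key steps are the same.
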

\begin{proof}
We denote the restriction of $T$ to $E^{\,a}$ as $T|_{E^{\,a}}$. Clearly, $T|_{E^{\,a}}:E^{\,a}\rightarrow F$ is a positive compact (resp. weakly compact) operator. Since $E^{\,a}$ is the maximal ideal with the induced norm order continuous, $T|_{E^{a}}:E^{\,a}\rightarrow F$ is order continuous. Now, from Theorem \ref{Theorem2} we know that $T|_{E^{a}}:E^{\,a}\rightarrow F$ has an order continuous positive compact (resp. weakly compact) extension to $E$, say, $\widetilde{T}$, defined via the formula
$$\widetilde{T}(x)=\sup\{T|_{E^{\,a}}(y):0\leq y\leq x, \,y\in E^{\,a}\},\,\,\, x\in E^{+}.$$It remains  to show that $\widetilde{T}\leq T$. To this end, let $x\in E^{+}$. Then for each $y\in E^{\,a}$ with $0\leq y\leq x$, from
$$T|_{E^{\,a}}(y)=T(y)\leq T(x)  $$we can easily see that $\widetilde{T}(x)\leq T(x)$. This implies that $0\leq\widetilde{T}\leq T$.
\end{proof}

\end{document}